\theoremstyle{plain}
\newtheorem*{hypothesis}{Hypothesis}
\newtheorem{theorem}{Theorem}[section]
\newtheorem{proposition}[theorem]{Proposition}
\newtheorem{corollary}[theorem]{Corollary}
\theoremstyle{definition}
\newtheorem{definition}[theorem]{Definition}
\newtheorem{example}[theorem]{Example}
\theoremstyle{remark}
\newtheorem{remark}[theorem]{Remark}
\renewenvironment{proof}{\noindent{\it Proof}.}{\qed}
\DeclareMathOperator{\ord}{ord}
\DeclareMathOperator{\ind}{ind}
\title{Evaluation of the Effectiveness of the Frobenius Primality Test}
\author{Sergei Khashin 
}
\date{}
\begin{document}
\maketitle
\begin{abstract}
The Frobenius primality test is based on the properties of the Frobenius automorphism of the quadratic extension of the residue field. Although it is probabilistic, we show that is ``very rarely wrong''. To date there are no counterexamples to this method  and there are reasons to believe that they do not exist at all. In this paper, we suggest a version of the Frobenius test and 
prove   that it does not fail for numbers less than $2^{64}$. We also show that a ``Frobenius pseudoprime'' will  necessarily have a prime divisor greater than 3000. 
\end{abstract}

{\footnotesize Key words: Primality test, Miller-Rabin test,
Frobenius test.}


\section*{Introduction}\label{s1}

The most of popular nowadays methods for primality testing are based on small Fermat theorem: Miller-Rabin and Solovay-Strassen primality tests. However reliability of such methods is not high: for
example in~\cite{Sorenson}, $24$- and $25$-digit numbers are found
that pass twelve and thirteen Miller-Rabin tests, respectively.
Therefore, even a few dozen of 
positive tests applied to some particular number does not not guarantee the primality of that number. This is  important for applications and, for example, in the Java language another test for numbers longer than $100$ bits is also used, the Lucas test, see~\cite{Bail}. That test has a significantly higher reliability, but a mathematical study of the combined use of these tests is difficult. 

The Frobenius primarity test method is based on the Frobenius automorphism of the finite field of order $p^2$, $GF(p^2)$ for some prime $p$.
It has been known
for a long time, see for example,~\cite{GrPom,DamFr2,Gr}. In~\cite{DamFr2, Seysen}, even stronger versions of this test were suggested.
However over the years the Frobenius method was greatly underestimated.

The reason for this is twofold. 
First of all, it is a common belief  that there are some small pseudo-primes for this test. For example, in the book~\cite
[p.$146$]{GrPom} it is stated that the number $5777 = 53\cdot 109$ is
a Frobenius pseudo-prime (FPP) for $c=5$. However, it is easy to verify that this is not the case. Apparently, at this point in the book, the term
``FPP'' is used in a slightly different sense. Secondly, as it was
established in~\cite{DamFr2}, an upper bound on the error probability of the Frobenius method is $\approx 1/1300$. Although this is much
less than the estimate for the Miller-Rabin $(1/4)$ method, 
still the probability error looks very significant.

In the present paper, beside other results, we show that Frobenius method does not fail on numbers less $2^{64}$. In fact, to date 
no single composite number is known to pass even the simplest version of the test, and it is our hypothesis that FPP do not exist at all.

Frobenius test consists in checking some equality in quadratic extension
of the integers modulo prime $p$. The equality of the norms of the corresponding elements is
equivalent to the Fermat test, and the equality of the irrational
parts is the Lukas test. That is, the Frobenius test is a natural
union of these two tests.

The complexity of the Frobenius test is twice the complexity of the
methods Fermat or Miller-Rabin, that is equal to the complexity of
two such tests.

The Miller-Rabin test for the number $n$ begins with a choice of
the base $a$, which is relatively prime to $n$. As the base, one either takes  the first prime numbers, $2,3,5,\dots$, or makes a pseudo-random choice of the number $a$ that is relatively
prime to $n$. In the usual definition of the Frobenius test (see, for example,~
\cite{DamFr2}), it is also suggested to make a pseudo-random
choice of the ``base''  $z=a+b\sqrt{c}$.

In our approach, we propose to restrict this choice to the forms
$2+\sqrt{c}$ or $1+\sqrt{c}$ depending on $c$ (for details see Definition~\ref{FrobTest}). This is much more convenient and, most
importantly, sufficient.
Nevertheless, most of the theorems is
given for arbitrary $a$ and $b$. 

The paper is organized as follows.
In Sec.~\ref{notation} we give the
necessary information and fix the notation.
In Sec.~\ref{sec_Frob} 
we introduce Frobenius method (Definition \ref{FrobTest})
and discover its properties. 
In Sec~\ref{sec:calc} we describe the non-trivial approach that lead to an algorithm that will allow to show that the Frobenius method does not fail on the numbers less $2^{64}$. In Sec.~\ref{sec:last} we show that an FPP necessarily has a prime divisor $>3000$.

The computational results of the paper were only possible due to our theoretical results on properties of FPPs. This significantly simplified the number of cases to consider 
and thus allowed to run the computations in some acceptable time.
Statements that require only mathematical reasoning 
are called ``Theorems'',
and
statements that in part require some computer calculations are called
``Propositions''.
The main results of the paper are
Theorem~\ref{thm:mult_factors}, Theorem~\ref{thm51}, Theorem~\ref{admit_div},
and Theorem~\ref{thm_coprime2}, and also
Proposition \ref{prop_64} 
and Proposition \ref{small_div}. 

\section{Notations and preliminary information}\label{notation}

\subsection{Jacobi symbol}

We refer the reader to \cite{vinogr} or \cite {vasil} for the definition and main properties of the Jacobi symbol, which we denote by $J(a/n)$. Here is a list of the properties that we shall use. 
(By
$gcd(a,b)$ we denote the greatest common divisor.)
\begin{itemize}
\item $J(a+n/n) =J(a/n)$.
\item If $p$ is prime and $gcd(a,p)=1$, then $J(a/p) = a^{(p-1)/2} \mod p$.
\item $J(ab/n) = J(a/n)J(b/n)$.
\item Let $n$ is odd and $n=n_1n_2$. Then $J(a/n) = J(a/n_1)J(a/n_2)$.
\item Let $p,q$ are odd. Then $J(p/q) = (-1)^{\frac{p-1}{2}\frac{q-1}{2}} J(q/p)$.
\end{itemize}

Below we give the values of the $J(a/n)$ for some $a$   that we shall need in what follows.

\[
 \begin{array}{ll}
 J(-1/n) &= \left\{
\begin{aligned}
 1,  \quad &n \equiv 1 \mod 4 \\
 -1, \quad &n \equiv 3 \mod 4 \\
\end{aligned}
 \right. \\
  J(2/n) &= \left\{
\begin{aligned}
 1,  \quad &n \equiv \pm 1 \mod 8 \\
 -1, \quad &n \equiv \pm 3 \mod 8 \\
\end{aligned}
 \right.\\
  J(3/n)  &= \left\{
\begin{aligned}
 1,  \quad &n \equiv \pm 1 \mod 12 \\
 -1, \quad &n \equiv \pm 5 \mod 12 \\
\end{aligned}
 \right. \quad(gcd(6,n)=1) \\
  J(5/n) =  J(n/5) &= \left\{
\begin{aligned}
 1,  \quad &n \equiv \pm 1 \mod 5 \\
 -1, \quad &n \equiv \pm 2 \mod 5 \\
\end{aligned}
 \right. \\
 \end{array}
\]

\subsection{Frobenius index}

In   number theory, the concept of the ``least quadratic non-residue mod
$p$"\ is widely used, that is, for the natural number $n$ find the
smallest positive $c$ such that $J(c/n)=-1$. In our case, a similar
but slightly different value is required.

\begin{definition} Let $n$ be an odd number and not a perfect square.
Its \textit{Frobenius index} $\ind_F(n) $ is the smallest $c$ among the
numbers $[-1,2,3,4,5,6, \dots]$ such that the Jacobi symbol $J(c/n)
\ne 1$.
\end{definition}

It follows from the multiplicativity of the Jacobi symbol that if a
Frobenius index is positive, then it is prime. 

It is not difficult to find out when the Frobenius index
$c=\ind_F(n)$ takes small values:

If $n\equiv 3 \mod 4$ then $c=-1$.

If $n\equiv 5 \mod 8$ then $c=2$.

Now we assume that $n$ is not divisible by $3$.

If $n\equiv 17 \mod 24$ then $c=3$. If ${n\equiv 1 \mod 24}$ then
$c\ge 5$.

Now we assume that $n$ is not divisible by $3$ and $5$.

If $n\equiv 73$ or $97 \mod 120$ then $c=5$. If $n\equiv 1$ or $49
\mod 120$ then $c\ge 7$.

\subsection{Quadratic field}

Let $c$ be a square-free integer and $z=a+b\sqrt{c} \in {\mathbb
Z}[\sqrt{c}\,]$. The number $a$ is called the \textit{rational part} of $z$,
$a=Rat(z)$, and $b$ is called the  \textit{irrational part}, $b=Irr(z)$. The number
$N(z)=a^2-b^2 c$ is called the \textit{norm} of $z$, $\overline{z} =
a-b\sqrt{c}\ $ is the \textit{conjugate} of $z$. So $N(z_1z_2)=N(z_1)N(z_2)$,
$N(z) = z \cdot \overline{z}$.

If $p$ is a prime and $J(c/p)=-1$ then the ring ${\mathbb Z}_p[\sqrt{c}\
]$ is isomorphic to the Galois field $GF(p^2)$. The map
\[
  z \rightarrow z^p \mod p
\]
is the Frobenius automorphism and $z^p \equiv \overline{z}$.

If $J(c/p)=+1$ then there exists $d \in {\mathbb Z}_p: d^2=c \mod p$. The
ring ${\mathbb Z}_p[\sqrt{c}\,]$ is isomorphic to the ${\mathbb Z}_p
\times {\mathbb Z}_p$ and the isomorphism is given by the formula:
\begin{equation} \label{eq_a1}
 a+b\sqrt{c} \rightarrow (a+bd, a-bd)\,.
\end{equation}
In this case $z^p \equiv z \mod p$.

\section{Frobenius primality test}\label{sec_Frob}

\subsection{Definition}

\begin{definition}\label{FrobTest}
Let $n$ be an odd number and not a perfect square, and let  $c = Ind_F(c)$ be the
Frobenius index. Let
\[
 z = \left\{
   \begin{array}{lll}
     2+\sqrt{c}, && c=-1, 2, \\
     1+\sqrt{c}, && c\ge 3. \\
   \end{array}
 \right.
\]
We call $n$ a \textit{Frobenius prime} if
\begin{equation}\label{def_Frob}
  z ^ n \equiv \overline{z} \mod n.
\end{equation}
\end{definition}

\begin{remark}
If $J(c/n)=0$, then $n$ is divided by $c$. This is a trivial case. So
we shall assume that $J(c/n)=-1$.
\end{remark}

The equality (\ref{def_Frob}) holds for any prime $n$ with
$J(c/n)=-1$.

If composite number $n$ is a Frobenius prime, then we call it a
\textit{Frobenius pseudoprime} (FPP). More precisely, if $z=a+b\sqrt{c}$ and
$z^n \equiv \overline{z} \mod n$, then the number $n$ will be called
\textit{Frobenius pseudoprime with parameters} $(a,b,c)$, or $FPP(a,b,c)$.

In other words, the FPP numbers are those on which the Frobenius
test is wrong.

\begin{example}
Let $n=19$, so $c=-1$, $z=2+i$,
\[
 z^n = -3565918 + 2521451 \cdot i \equiv 2-i \mod n.
\]
\end{example}

\begin{example}
Let $n=33$, so $c=-1$, $z=2+i$,
\[
 z^n \equiv 2 + 22 \cdot i \mod n \ne \overline{z}.
\]
\end{example}

\begin{example}
Let $n=17$, so $c=3$, $z=1+\sqrt{3}$,
\[
 z^n =  13160704+7598336 \sqrt{3} \equiv 1-\sqrt{3} \mod n.
\]
\end{example}

Note that if $n$ is $FPP(a,b,c)$, then $n$ is pseudoprime to a base
$N(z)=a^2-b^2c$, that is the Frobenius test includes the Fermat
test.

A comparison of the irrational part is actually a Lucas test. Thus,
the Frobenius test is a combination of the Fermat and Lucas tests.

\begin{hypothesis}
Frobenius pseudoprime numbers do not exist.
\end{hypothesis}

In other words, the Frobenius test is never wrong. It is also useless to seek a counterexample by a straightforward search. For as it will be proved in Proposition~\ref{prop_64} it is not among the numbers less than $2^{64}$.
It is more likely to find a FPP in the form of the product of primes.

\begin{remark}
The choice with the base $z=2+\sqrt{c}$ or $z=1+\sqrt{c}$ is not random.
For some $n$ may exist ``bad'' bases, or in the terminology of the
work \cite{DamFr2} ``liars''. The smallest example is $n=7\cdot
19\cdot 43=5719$. In this case the base $z=4689+\sqrt{-1}$ is
``liar'' that is
\[
 z^n = \overline{z} \mod n.
\]
\end{remark}

\begin{definition}
Let $n$ be a Frobenius pseudoprime with parameters $(a,b,c)$. The
prime factor $p$ of $n$ we call $\Phi$-positive, if $J(c/p)=+1$ and
$\Phi$-negative, if $J(c/p)=-1$.
\end{definition}

Each FPP has an odd number of $\Phi$-negative factors and arbitrary
number of $\Phi$-positive.

\subsection{First important theorem}

The following statement (in slightly different formulations) is
proved in~\cite{DamFr2,Seysen}.

\begin{theorem} \label{main_thm}
Let $n$ be an $FPP(a,b,c)$, $n=pq$ where $p$ is prime. Then

a) if $J(c/p)=-1$, then $z^q \equiv z \mod p$.

b) if $J(c/p)=+1$, then $z^q \equiv \overline{z} \mod p$.
\end{theorem}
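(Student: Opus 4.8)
The plan is to reduce the defining congruence of an $FPP$ modulo the prime factor $p$ and then read off the conclusion from the two possible structures of the ring $R_p := {\Bbb Z}_p[\sqrt{c}\,]$ recalled in Section \ref{notation}. Since $J(c/n)=-1$ the Jacobi symbol is nonzero, so $\gcd(c,n)=1$ and in particular $\gcd(c,p)=1$; hence $R_p\cong GF(p^2)$ when $J(c/p)=-1$ and $R_p\cong {\Bbb Z}_p\times{\Bbb Z}_p$ when $J(c/p)=+1$. In the first case the Frobenius map coincides with conjugation, $w^p\equiv\overline{w}\mod p$ for all $w\in R_p$; in the second, $w^p\equiv w\mod p$ for all $w\in R_p$. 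The common starting point is that $p\mid n$ forces $z^n\equiv\overline{z}\mod p$, and writing $n=pq$ gives $z^n=(z^p)^q$ in $R_p$.

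For part (b), substituting $z^p\equiv z\mod p$ into $z^n=(z^p)^q$ gives $z^n\equiv z^q\mod p$; comparing with $z^n\equiv\overline{z}\mod p$ yields $z^q\equiv\overline{z}\mod p$, as claimed.

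For part (a), substituting $z^p\equiv\overline{z}\mod p$ gives $z^n\equiv\overline{z}^q\mod p$, hence $\overline{z}^q\equiv\overline{z}\mod p$. I would then apply the conjugation map $\sigma\colon a+b\sqrt{c}\mapsto a-b\sqrt{c}$: it is a ring automorphism of $R_p$ (induced by $\sqrt{c}\mapsto-\sqrt{c}$ on ${\Bbb Z}_p[x]/(x^2-c)$, well defined because $x^2-c$ is fixed), it satisfies $\sigma^2=\mathrm{id}$, and, being a ring homomorphism, it commutes with raising to the $q$-th power. Applying $\sigma$ to $\sigma(z)^q\equiv\sigma(z)\mod p$ then gives $z^q\equiv z\mod p$.

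The argument is almost entirely substitution, so the only delicate points are bookkeeping: verifying that $\gcd(c,p)=1$ so that the structural description of $R_p$ applies, and being explicit that conjugation is a genuine ring homomorphism on $R_p$ (not merely additive) so that it may be interchanged with exponentiation in part (a). The latter is the one place I would spell out the detail; if one prefers to avoid conjugation, part (a) can instead be obtained from $z^{pq}\equiv z^{p}\mod p$ together with $\gcd(p,p^2-1)=1$, but that route additionally requires $z$ to be a unit in $R_p$ (i.e.\ $N(z)\not\equiv0\mod p$), so using $\sigma$ is cleaner.
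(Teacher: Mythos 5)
Your proof is correct and follows essentially the same route as the paper's: reduce the defining congruence $z^{pq}\equiv\overline{z}$ modulo $p$, substitute $z^p\equiv\overline{z}$ or $z^p\equiv z$ according to the sign of $J(c/p)$, and in case (a) pass from $\overline{z}^{\,q}\equiv\overline{z}\bmod p$ to $z^q\equiv z\bmod p$ by conjugation. The paper performs that last step silently, whereas you justify it by noting that conjugation is a ring automorphism commuting with $q$-th powers; that is the only (welcome) added detail.
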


\begin{proof}
Let $J(c/p)=-1$, then $z^p \equiv \overline{z} \mod p$. The number
$n$ is FPP, that is $z^{pq} \equiv \overline{z} \mod pq$, so
\[
 z^{pq}\equiv (z^p)^q  \equiv \overline{z}^q  \equiv \overline{z} \mod p,
\]
and
\[
 z^q\equiv z \mod p\,.
\]

Let $J(c/p)=+1$, then $z^p \equiv z \mod p$. The number $n$ is FPP,
so $z^{pq} \equiv \overline{z} \mod pq$ and
\[
 z^{pq}\equiv (z^p)^q  \equiv z^q  \equiv \overline{z} \mod p\,.
\]
\end{proof}

\begin{corollary} \label{cor21}
Let $n$ be an $FPP(a,b,c)$ and $p$ be a $\Phi$-negaive prime divisor, $Q = \ord(z \mod p)$. Then
\[
    n/p \equiv 1 \mod Q,
\]
\[
    n \equiv p \mod Q.
\]
\end{corollary}

\begin{corollary} \label{cor_main_th}
Let $z=a+b\sqrt{c} \in {\mathbb Z}$ and $z^q=a_q+b_q \sqrt{c} \in {\mathbb
Z}$ and $n$ be a $FPP(a,b,c)$, $n=pq$, where $p$ is prime. Then

a) if $J(c/q)=+1$ then $p$ is a prime factor of $gcd(a_q-a,
b_q-b)$;

b) if $J(c/q)=-1$ then $p$ is a prime factor of $gcd(a_q-a,
b_q+b)$.

\end{corollary}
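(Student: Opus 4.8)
The plan is to derive Corollary~\ref{cor_main_th} directly from Theorem~\ref{main_thm} by translating the congruences for $z$ into congruences for the rational and irrational parts of $z^q$. Write $z^q = a_q + b_q\sqrt{c}$, so that $\overline{z^q} = a_q - b_q\sqrt{c}$, and recall that for prime $q$ the Frobenius map on ${\Bbb Z}_q[\sqrt{c}\,]$ sends $w$ to $w$ when $J(c/q)=+1$ and to $\overline{w}$ when $J(c/q)=-1$. Hence $z^q \bmod q$ equals $z \bmod q$ in case (a) and $\overline{z} \bmod q$ in case (b) — but note this is the congruence modulo $q$, not modulo $p$; the roles of $p$ and $q$ here are genuinely swapped relative to Theorem~\ref{main_thm}, and keeping that straight will be the one place to be careful.

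First I would treat case (a), $J(c/q)=+1$. I want to show $p \mid \gcd(a_q - a, b_q - b)$, equivalently $z^q \equiv z \pmod p$. To get this I apply Theorem~\ref{main_thm} with the two prime-versus-cofactor splittings of $n$ that are available: write $n = pq$ as in the statement (with $p$ prime), and separately factor $q$ into primes. Actually the cleaner route: since $n$ is $FPP(a,b,c)$ and $n = p\cdot q$ with $p$ prime, Theorem~\ref{main_thm} applied to this splitting gives $z^q \equiv z \pmod p$ if $J(c/p) = -1$ and $z^q \equiv \overline{z} \pmod p$ if $J(c/p) = +1$. Now I use $J(c/n) = -1$ (the running assumption) together with multiplicativity $J(c/n) = J(c/p)\,J(c/q)$: in case (a) we are given $J(c/q) = +1$, so $J(c/p) = -1$, and Theorem~\ref{main_thm}(a) yields exactly $z^q \equiv z \pmod p$. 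Comparing rational and irrational parts gives $a_q \equiv a$ and $b_q \equiv b \pmod p$, i.e. $p \mid a_q - a$ and $p \mid b_q - b$, so $p \mid \gcd(a_q-a,\,b_q-b)$, and $p$ is prime.

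For case (b), $J(c/q) = -1$, the same multiplicativity argument gives $J(c/p) = +1$, so Theorem~\ref{main_thm}(b) applies and yields $z^q \equiv \overline{z} \pmod p$. Writing out both parts, $a_q \equiv a \pmod p$ as before, but now $b_q \equiv -b \pmod p$, i.e. $p \mid a_q - a$ and $p \mid b_q + b$, hence $p$ is a prime factor of $\gcd(a_q - a,\, b_q + b)$. The only genuine subtlety — and the thing I would state explicitly — is the bookkeeping: the hypothesis of the corollary is phrased in terms of $J(c/q)$, whereas Theorem~\ref{main_thm} is phrased in terms of $J(c/p)$, and the bridge between them is precisely $J(c/n) = J(c/p)J(c/q) = -1$. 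Once that is in place, the proof is just reading off coordinates; there is no real obstacle beyond making sure the sign on $b$ flips in exactly the right case.
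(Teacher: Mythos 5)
Your argument is correct and is exactly the intended derivation: the paper states this corollary without an explicit proof, and it follows from Theorem~\ref{main_thm} precisely via the bridge $J(c/n)=J(c/p)J(c/q)=-1$ that you identify, plus the uniqueness of the representation $a+b\sqrt{c}$ in ${\Bbb Z}_p[\sqrt{c}\,]$ (valid since $p\nmid c$), which lets you equate rational and irrational parts. The only blemish is the opening detour about the Frobenius map on ${\Bbb Z}_q[\sqrt{c}\,]$ for prime $q$ --- $q$ need not be prime here --- but you abandon that route before it does any harm.
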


\begin{example}
Let $q=31,\ c=5$. Then $J(c/q)=+1$ and
\[
 (1+\sqrt{c})^q = a_q+  b_q\sqrt{c} =3232337626136576+1445545331654656 \sqrt{c}
\]
and $gcd(a_q-a, b_q-b)=104005$, so $p$ is one of the prime factor of
$104005$: $5, 11, 31, 61$.
\end{example}

\begin{example}
Let $q=37,\ c=5$. Then $J(c/q)=-1$ and
\[
 (1+\sqrt{c})^q = 3712124497172627456+1660112543324045312 \sqrt{c}
\]
and $gcd(a_q-a, b_q+b)=37$, so so $p$ can be only $37$.
\end{example}

\begin{remark}
Although the numbers $ a_q, b_q $ grow rather quickly, the
corresponding common divisor are not too large and can be factorized
up to $q$ equal to many millions.
\end{remark}

\subsection{Multiple factors}\label{multiple}

\begin{theorem} \label{thm:mult_factors}
Let $p$ be a prime, $n=p^2q$ for some $q$ ($q$ can be a multiple of
$p$) and $n$ be a $FPP(a,b,c)$. Then
\[
 z^p \equiv \overline{z} \mod p^2\,.
\]
\end{theorem}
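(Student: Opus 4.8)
The plan is to reduce the statement modulo $p$ first, use Theorem \ref{main_thm} to pin down the behaviour of $z^q$ modulo $p$, and then lift the congruence $z^p\equiv\overline z$ from $\bmod\ p$ to $\bmod\ p^2$ by a Fermat-quotient / lifting-the-exponent type argument. Concretely, write $m = p q$, so $n = p\cdot m$ and $n$ is $FPP(a,b,c)$, i.e. $z^{n}\equiv\overline z\bmod n$; in particular $z^{pm}\equiv\overline z\bmod p^2$. I first want to know $z^m\bmod p$. Since $p\mid m$, applying Theorem \ref{main_thm} with the factorization $n = p\cdot m$: if $J(c/p)=-1$ then $z^{m}\equiv z\bmod p$, and if $J(c/p)=+1$ then $z^{m}\equiv\overline z\bmod p$. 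In either case $z^{m}$ reduces modulo $p$ to a unit of the ring $R=\mathbb Z_p[\sqrt c\,]$ (it is $z$ or $\overline z$, both units since $N(z)$ is prime to $n$, hence to $p$ — here one uses the standing assumption $J(c/n)=-1$, so $c$ is prime to $n$, and $N(z)=a^2-b^2c$; for the explicit bases $2\pm\sqrt c$, $1\pm\sqrt c$ one checks $N(z)$ is prime to $p$, or argues that $z^{pm}\equiv\overline z$ forces $z$ to be a unit mod $p$ anyway).

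**Next**, I would treat the two cases $J(c/p)=\pm1$ in parallel using the Frobenius description from section \ref{notation}. Suppose $J(c/p)=-1$, so $R/pR\cong GF(p^2)$ and the Frobenius map is $w\mapsto w^p$, sending $z\mapsto\overline z$. Then modulo $p$ we have $z^{m}\equiv z$, so $z^{pm}\equiv (z^{m})^{p}\equiv z^{p}\equiv\overline z\bmod p$ — consistent, but we need the mod $p^2$ statement. Write $z^{m} = z + p\,t$ for some $t\in R$ (interpreting everything in $R$, with $m = pq$). Then
\[
 z^{pm} = (z^{m})^{p} = (z + p t)^{p} \equiv z^{p} + p\cdot p\, t\, z^{p-1} \equiv z^{p}\pmod{p^2},
\]
since the binomial coefficients $\binom{p}{k}$ for $1\le k\le p-1$ are divisible by $p$ and the remaining terms carry an extra factor $p$. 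Because $n$ is $FPP$, $z^{pm}\equiv\overline z\bmod p^2$, hence $z^{p}\equiv\overline z\bmod p^2$, which is exactly the claim. In the case $J(c/p)=+1$ the ring $R/p R\cong\mathbb Z_p\times\mathbb Z_p$, the $p$-th power map is the identity there, $z^{m}\equiv\overline z\bmod p$, so writing $z^{m} = \overline z + p t$ and raising to the $p$-th power gives $z^{pm} = (\overline z + pt)^{p}\equiv \overline z^{\,p}\bmod p^2$; combined with $z^{pm}\equiv\overline z\bmod p^2$ this yields $\overline z^{\,p}\equiv\overline z\bmod p^2$, and conjugating (the conjugation map is a ring automorphism of $R$ fixing $p$) gives $z^{p}\equiv z\bmod p^2$ — which again matches, because when $J(c/p)=+1$ the $p$-th power map is trivial mod $p$ so $z^p\equiv z$ and the content is the mod $p^2$ refinement; and $z\equiv\overline z$ need not hold, so one should be careful which of $z,\overline z$ appears — I would double-check by noting the theorem statement writes $\overline z$ on the right, and verify it is consistent with $J(c/p)=+1$ forcing $b\equiv 0$ behaviour or re-deriving directly. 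The cleaner route for the $J(c/p)=+1$ case: from $z^{m}\equiv\overline z\bmod p$ we also get $z^{m}\equiv z\bmod p$ is false in general, so instead I use $z^{pm}\equiv\overline z\bmod p^2$ directly with $z^{m}=\overline z+pt$ to land $\overline z^{\,p}\equiv\overline z\bmod p^2$, then observe $\overline z^{\,p}\equiv\overline z$ is equivalent to $z^p\equiv z\equiv\overline z$? — this is the subtle point.

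**The main obstacle** I anticipate is exactly this bookkeeping of which representative ($z$ versus $\overline z$) survives in the $J(c/p)=+1$ branch, and making sure the conclusion "$z^p\equiv\overline z\bmod p^2$" is stated correctly in both branches — the freshman's-dream step $(x+py)^p\equiv x^p\bmod p^2$ is routine (valid in any commutative ring, since $p\mid\binom pk$ for $0<k<p$ and each cross term has a factor $p^k$ with $k\ge1$, plus the pure $p^p y^p$ term), but it only propagates the congruence one level up, so one must feed in the right mod-$p$ identity for $z^{pq}$, which is where Theorem \ref{main_thm} and the parity of the exponent $q$ (even, since $p\mid q$ when... actually $q$ need only be a multiple of nothing — re-reading, $q$ is arbitrary) enter. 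I would therefore be careful to apply Theorem \ref{main_thm} to the factorization $n=p\cdot(pq)$ rather than $n=p\cdot q$, since it is $pq$ that plays the role of the cofactor, and $pq$ is certainly a multiple of $p$; the Jacobi symbol $J(c/(pq))$ may differ from $J(c/q)$, but the theorem only needs $J(c/p)$, so this is harmless. Once the correct mod-$p$ seed is in place, the lift to mod $p^2$ is immediate and the proof closes.
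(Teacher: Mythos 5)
Your argument in the case $J(c/p)=-1$ is correct and is in fact more direct than the paper's. The paper does not apply Theorem~\ref{main_thm} to the factorization $n=p\cdot(pq)$ at all; instead it separately establishes $z^q\equiv\overline{z}\bmod p$ (from $z^{p^2}\equiv z\bmod p$) and $z^p\equiv\overline{z}\bmod p$ (the Frobenius property), writes $z^p\equiv\overline{z}+pu$ and $z^q\equiv\overline{z}+pv$ modulo $p^2$, and then performs the freshman's-dream lift twice --- first $z^{pq}\equiv(\overline{z}+pv)^p\equiv\overline{z}^{\,p}\equiv z+p\overline{u}$, then $z^{p^2q}\equiv(z+p\overline{u})^p\equiv\overline{z}+pu\bmod p^2$ --- and concludes $u=0$ by comparing with $z^n\equiv\overline{z}\bmod p^2$. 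Your single application of Theorem~\ref{main_thm}(a) with cofactor $pq$, followed by one lift $(z+pt)^p\equiv z^p\bmod p^2$, reaches the same conclusion in one step; both routes rest on the same two ingredients (a mod-$p$ seed plus $p\mid\binom{p}{k}$ for $0<k<p$), so nothing is lost by your shortcut.

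Your unease about the branch $J(c/p)=+1$ is justified, and it is not a defect of your proof alone: the paper's proof silently assumes $J(c/p)=-1$ at the unexplained step ``$z^p\equiv\overline{z}\bmod p$'', which is exactly the Frobenius property and fails when $J(c/p)=+1$ (there $z^p\equiv z\bmod p$, and $z\equiv\overline{z}\bmod p$ would force $p\mid 2b$, impossible for the bases $b=1$ used in the test). In that branch your computation correctly yields $\overline{z}^{\,p}\equiv\overline{z}\bmod p^2$, equivalently $z^p\equiv z\bmod p^2$, and the stated conclusion $z^p\equiv\overline{z}\bmod p^2$ cannot hold; so either the theorem is implicitly restricted to $\Phi$-negative repeated factors, or the $\Phi$-positive case must be ruled out separately --- the paper does neither. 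In short, you have proved everything the paper's own proof proves, by a cleaner route, and you have correctly located the one genuine gap, which the paper's argument shares.
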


\begin{proof}
In the ring ${\mathbb Z}_{p^2}[\sqrt{c}\,]$:
\[
  (a+pb)^p \equiv a^p \mod p^2.
\]
So
\[
 z^{p^2q} \equiv \overline{z} \mod p^2q\,,
\]
and therefore
\[
 z^{p^2q} \equiv \overline{z} \mod p\,.
\]
As $z^{p^2} \equiv z \mod p$, so $z^q \equiv z^p \equiv \overline{z}
\mod p$ and
\[
 z^p \equiv \overline{z} +pu \mod p^2\,,
\]
\[
 z^q \equiv \overline{z} +pv \mod p^2
\]
for some $u,v \in {\mathbb Z}_{p}[\sqrt{c}\,]$. Then
\[
 z^{pq} \equiv (z^q)^p \equiv
 (\overline{z} +pv)^p \equiv \overline{z}^p \equiv z+p\, \overline{u} \mod p^2\,,
\]
\[
 z^{p^2q} \equiv (z^{pq})^p \equiv
 (z +p\overline{u})^p \equiv \overline{z}+pu \mod p^2\,.
\]

On the other hand $z^n\equiv \overline{z} \mod p^2$, that is $u=0$
therefore $z^p \equiv \overline{z} \mod p^2$.
\end{proof}

\begin{corollary}
If $n=p^2q$ is a $FPP(a,b,c)$, then $p^2$ is also  $FPP(a,b,c)$.
\end{corollary}

\begin{corollary}
    If $n=p^2q$ is a $FPP(a,b,c)$, then $N(z)^{p-1} \equiv 1 \mod p^2$,
    where $N(z)$ is a norm of $z$.
\end{corollary}

\subsection{$\Phi$-positive factor}
\label{f_pos}

There are very few such numbers (see section \ref{f_pos_calc}), but
they still exist.

\begin{theorem} \label{thm51}
Let $n$ be a Frobenius pseudoprime, $z = a + b \sqrt{c}$ and $p$ is
a $\Phi$-positive prime factor of $n$, $n=p\cdot q$, $c \equiv d^2
\mod p$. We introduce the notation:
\[
    z_1 = a+b \cdot d \mod p,
\]
\[
    z_2 = a-b \cdot d \mod p,
\]
$z_1, z_2 \in {\mathbb Z}_p$. Then
\begin{equation}\label{z1q}
    z_1^q \equiv z_2 \mod p,
\end{equation}
\begin{equation}\label{z2q}
    z_2^q \equiv z_1 \mod p,
\end{equation}
\end{theorem}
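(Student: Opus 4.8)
The plan is to reduce the statement to Theorem \ref{main_thm}(b) together with the explicit ring isomorphism (\ref{eq_a1}). Since $p$ is a $\Phi$-positive prime factor of $n$ we have $J(c/p)=+1$, so Theorem \ref{main_thm}(b) gives at once
\[
 z^q \equiv \overline{z} \mod p .
\]
Everything then comes down to translating this single congruence in ${\Bbb Z}_p[\sqrt{c}\,]$ into the pair of congruences (\ref{z1q})--(\ref{z2q}) in ${\Bbb Z}_p$.

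First I would invoke the isomorphism $\varphi : {\Bbb Z}_p[\sqrt{c}\,] \to {\Bbb Z}_p\times{\Bbb Z}_p$ of (\ref{eq_a1}), namely $\varphi(a+b\sqrt{c}) = (a+bd,\,a-bd)$; it is well defined and bijective because $p$ is odd and $d\not\equiv 0 \mod p$ (as $c$ is a nonzero quadratic residue modulo $p$). By definition $\varphi(z)=(z_1,z_2)$, and feeding $\overline{z}=a-b\sqrt{c}$ through $\varphi$ shows that conjugation becomes the coordinate swap: $\varphi(\overline{z}) = (a-bd,\,a+bd) = (z_2,z_1)$.

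Next, since $\varphi$ is a ring homomorphism it commutes with taking $q$-th powers, so $\varphi(z^q) = (z_1^q,\,z_2^q)$. Applying $\varphi$ to both sides of $z^q\equiv\overline{z} \mod p$ and comparing the two coordinates yields $z_1^q\equiv z_2 \mod p$ and $z_2^q\equiv z_1 \mod p$, which are exactly (\ref{z1q}) and (\ref{z2q}).

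I do not expect any serious obstacle here: the only points needing a word of care are verifying that $\varphi$ is genuinely an isomorphism (invertibility of $2$ and of $d$ modulo the odd prime $p$) and that conjugation corresponds to interchanging the two factors, after which the conclusion is immediate. An alternative, avoiding the name $\varphi$, would be to reduce $z$ modulo the maximal ideal $(\sqrt{c}-d)$ to recover $z_1$ and modulo $(\sqrt{c}+d)$ to recover $z_2$, noting that $\overline{z}$ reduces to $z_2$ and $z_1$ respectively; this is the same computation phrased differently.
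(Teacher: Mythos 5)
Your proposal is correct and follows essentially the same route as the paper: the paper likewise derives $z^q\equiv\overline{z}\bmod p$ from the FPP definition together with $z^p\equiv z\bmod p$ (which is just Theorem \ref{main_thm}(b)), and then applies the isomorphism (\ref{eq_a1}) to split this into the two coordinate congruences. You merely spell out the details the paper leaves implicit, namely that conjugation corresponds to swapping the two coordinates and that the isomorphism commutes with $q$-th powers.
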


\begin{proof}
By definition:
\[
 (a + b \sqrt{c})^{pq} = a - b \sqrt{c} \mod p\,.
\]
If $J(c/p) = +1$ then $z^p = z$, so
\[
 (a + b \sqrt{c})^ q = a - b \sqrt{c} \mod p
\]
Using isomorphism ${\mathbb Z}_p[\sqrt{c}\,] \rightarrow {\mathbb Z}_p
\times {\mathbb Z}_p$, we obtain the required.
\end{proof}

\begin{corollary}
Let
\[
 N=z_1z_2 = a^2-b^2\cdot c
\]
and
\[
 w = z_1/z_2 = \frac{(a+bd)^2}{N} \mod p .
\]
Then
\[
 N^{q-1}=1\,,
\]
\[
 w^{q+1}=1\,.
\]
\end{corollary}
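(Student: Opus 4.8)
The plan is to derive both identities directly from Theorem~\ref{thm51}, working entirely inside $\Bbb Z_p$. First I would observe that, since $z_1 z_2 = N$ in $\Bbb Z_p$ and $p$ does not divide $N$ (otherwise $p \mid z$, forcing $J(c/p) = 0$, excluded by hypothesis), both $z_1$ and $z_2$ are units in $\Bbb Z_p$, so the ratio $w = z_1/z_2$ is well defined.

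For the norm relation, I would multiply the two congruences (\ref{z1q}) and (\ref{z2q}) of Theorem~\ref{thm51}: from $z_1^q \equiv z_2$ and $z_2^q \equiv z_1 \pmod p$ we get $(z_1 z_2)^q \equiv z_1 z_2 \pmod p$, i.e. $N^q \equiv N \pmod p$, and cancelling the unit $N$ yields $N^{q-1} \equiv 1 \pmod p$. For the second relation, I would instead divide: $z_1^q / z_2^q \equiv z_2/z_1 \pmod p$, that is $w^q \equiv w^{-1} \pmod p$, hence $w^{q+1} \equiv 1 \pmod p$. The stated formula $w = (a+bd)^2/N$ is just the substitution $z_1 = a+bd$, $z_2 = a - bd$ together with $z_2 = N/z_1$, so $w = z_1/z_2 = z_1^2/(z_1 z_2) = (a+bd)^2/N$.

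There is essentially no obstacle here — the corollary is a two-line algebraic consequence of the theorem. The only point that requires a word of care is the invertibility of $N$ (equivalently of $z_1, z_2$) modulo $p$, which is needed both to form $w$ and to cancel in the exponent identities; this follows from the standing assumption $J(c/n) = -1$, which in particular gives $\gcd(N(z), p) = 1$. I would state this observation explicitly at the start of the proof and then present the multiplication and division steps.

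\begin{proof}
Since $J(c/p)=+1$ and (by the standing assumption) $J(c/n)=-1$, the prime $p$ does not divide $c$, nor does it divide $N=a^2-b^2c$; indeed $p\mid N$ would give $p\mid z$ in $\Bbb Z_p[\sqrt c\,]$ and hence $J(c/p)=0$. Thus $z_1=a+bd$ and $z_2=a-bd$ are units in $\Bbb Z_p$, $z_1 z_2 \equiv N \pmod p$, and $w=z_1/z_2=z_1^2/(z_1z_2)=(a+bd)^2/N$ is well defined.

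Multiplying the congruences (\ref{z1q}) and (\ref{z2q}) of Theorem~\ref{thm51} gives
\[
 N^q \equiv (z_1z_2)^q \equiv (z_1^q)(z_2^q) \equiv z_2 z_1 \equiv N \pmod p,
\]
and since $N$ is a unit, $N^{q-1}\equiv 1 \pmod p$.

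Dividing (\ref{z1q}) by (\ref{z2q}) gives
\[
 w^q \equiv \frac{z_1^q}{z_2^q} \equiv \frac{z_2}{z_1} \equiv w^{-1} \pmod p,
\]
hence $w^{q+1}\equiv 1 \pmod p$.
\end{proof}
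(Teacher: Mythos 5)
Your proof is correct and follows exactly the paper's own argument: multiply (\ref{z1q}) and (\ref{z2q}) to get $N^{q-1}\equiv 1$, divide them to get $w^{q+1}\equiv 1$. The only difference is that you explicitly justify the invertibility of $N$ (and hence of $z_1,z_2$) modulo $p$, a point the paper leaves implicit; that is a welcome but minor addition.
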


\begin{proof}
Multiplying equalities (\ref{z1q}) and (\ref{z2q}), we obtain
\[
 (z_1z_2)^q = z_1 z_2\,,
\]
or $N^{q-1}=1$, and dividing them into each other
\[
 (z_1/z_2)^q = z_2/z_1\,,
\]
or $w^{q+1}=1$.
\end{proof}

\begin{corollary}
Let $\alpha=\ord( N \mod\ p)$ and $\beta= \ord( w \mod p)$. Then
\[
 gcd(\alpha,  \beta ) \le 2.
\]
\end{corollary}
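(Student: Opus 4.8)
The plan is to reduce everything to a single elementary divisibility fact about $q-1$ and $q+1$. By the preceding corollary we already know that $N^{q-1} \equiv 1 \pmod p$ and $w^{q+1} \equiv 1 \pmod p$; in particular $N$ and $w$ are units modulo $p$, so their multiplicative orders $\alpha$ and $\beta$ are well defined. From $N^{q-1}\equiv 1$ I would conclude $\alpha \mid q-1$, and from $w^{q+1}\equiv 1$ that $\beta \mid q+1$.

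Next I would let $g=\gcd(\alpha,\beta)$. Since $g \mid \alpha \mid q-1$ and $g \mid \beta \mid q+1$, it follows that $g$ divides the difference $(q+1)-(q-1)=2$. Hence $g \in \{1,2\}$, which gives $\gcd(\alpha,\beta)\le 2$ as claimed. (One may remark that, because $n$ is odd, $q=n/p$ is odd, so both $q-1$ and $q+1$ are even and $\gcd(q-1,q+1)=2$ exactly; but only the divisibility $g\mid 2$ is needed for the stated bound.)

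There is essentially no obstacle here: the only things to be careful about are that $N$ and $w$ are genuinely invertible mod $p$ (which is immediate from the congruences $N^{q-1}\equiv 1$, $w^{q+1}\equiv 1$ supplied by the previous corollary, so in particular $z_2 \not\equiv 0 \pmod p$ and $w$ makes sense), and the standard fact that $x^m \equiv 1 \pmod p$ forces $\operatorname{ord}(x)\mid m$. Everything else is the one-line observation $g \mid (q+1)-(q-1)=2$.
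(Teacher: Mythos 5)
Your proof is correct and follows essentially the same route as the paper: both deduce $\alpha \mid q-1$ and $\beta \mid q+1$ from the preceding corollary and then observe that any common divisor of $\alpha$ and $\beta$ must divide $(q+1)-(q-1)=2$. Your write-up is simply a more explicit version of the paper's one-line argument.
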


\begin{proof}
We have
\[
 q-1 = 0\mod \alpha,
\]
\[
 q+1 = 0\mod \beta.
\]
These two conditions can not be fulfilled simultaneously if $\alpha$
and $\beta$ have a common factor $>2$.
\end{proof}

\begin{corollary} \label{cor_plus}
Let $n$ be a Frobenius pseudoprime, $z = a + b \sqrt{c}$, $p$ be a
$\Phi$-positive prime factor of $n$ and $q=n/p$. Then
\[
 q \equiv A_p \mod M_p,
\]
where
\[
 M_p = lcm( \ord(z_1 \mod p),  \ord(z_2 \mod p)).
\]
\end{corollary}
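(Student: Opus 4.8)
\noindent The plan is to extract everything from Theorem~\ref{thm51}, which already delivers the two congruences $z_1^{q}\equiv z_2 \pmod p$ and $z_2^{q}\equiv z_1 \pmod p$ for the $\Phi$-positive factor $p$, with $z_1\equiv a+bd$, $z_2\equiv a-bd \pmod p$. The first thing I would record is that $z_1$ and $z_2$ are units modulo $p$: since $n$ is $FPP(a,b,c)$ it is in particular a Fermat pseudoprime to the base $N(z)=a^2-b^2c$, so $gcd(N(z),n)=1$, and $z_1z_2\equiv N(z)$ is therefore invertible modulo $p$. Hence $ord(z_1\bmod p)$, $ord(z_2\bmod p)$ and $M_p$ are well defined.

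Next I would observe that the two congruences force $ord(z_1\bmod p)=ord(z_2\bmod p)$: from $z_1^{q}\equiv z_2$ we get $z_2\in\langle z_1\rangle$, so $ord(z_2)\mid ord(z_1)$, and the symmetric relation gives the reverse divisibility. Consequently $M_p$ is simply this common order; this remark is a useful sanity check, though it is not strictly needed for the conclusion.

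The actual content is then the elementary fact that an exponential relation in a cyclic group determines the exponent modulo the order of the base. From $z_1^{q}\equiv z_2 \pmod p$, any integer $q'$ with $z_1^{q'}\equiv z_2$ satisfies $q'\equiv q$ modulo $ord(z_1\bmod p)$; likewise $z_2^{q}\equiv z_1$ fixes $q$ modulo $ord(z_2\bmod p)$. Since a residue class modulo $\alpha$ and a residue class modulo $\beta$ that share a member meet in exactly one class modulo $lcm(\alpha,\beta)$, the value $A_p:=q\bmod M_p$ is characterised (independently of the rest of $n$) as the unique element of $\{0,1,\dots,M_p-1\}$ satisfying both $z_1^{A_p}\equiv z_2$ and $z_2^{A_p}\equiv z_1 \pmod p$, and $q\equiv A_p\pmod{M_p}$ as claimed. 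In practice $A_p$ is obtained from $p$ and $c$ alone by a discrete-logarithm computation in ${\Bbb Z}_p^{*}$, which is what makes the congruence a genuine restriction on $q=n/p$.

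There is no real obstacle here; the only steps needing a word of care are the verification that $z_1,z_2$ are invertible modulo $p$ (so that the orders, and hence $M_p$, make sense) and the invocation of the elementary lemma that a pair of compatible congruences modulo $\alpha$ and modulo $\beta$ is equivalent to a single congruence modulo $lcm(\alpha,\beta)$.
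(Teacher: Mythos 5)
Your proof is correct and takes essentially the same route as the paper: both read the congruence for $q$ directly off the two relations $z_1^q\equiv z_2$, $z_2^q\equiv z_1 \pmod p$ of Theorem~\ref{thm51}. You are in fact somewhat more careful than the paper's two-line argument, which only remarks that the relations are invariant when $q$ changes by a multiple of $M_p$; you additionally verify that $z_1,z_2$ are units modulo $p$ (so the orders exist) and prove the direction actually needed for the statement, namely that the relations pin down $q$ to a \emph{single} residue class modulo $M_p$, so that $A_p$ depends only on $p$ and $z$.
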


\begin{proof}
If $q$ is increased by a multiple of $\ord (z_1 \mod p)$ and $\ord
(z_2 \mod p)$, then both sides of the equalities (\ref{z1q}) and
(\ref{z2q}) do not change.

Note that both $\ord(z_1 \mod p)$ and $\ord(z_2 \mod p)$ are divisors
of $p-1 $, so their least common multiple is also a divisor of
$p-1$.
\end{proof}

\subsection{$\Phi$-negative factor}\label{f_neg}

\begin{theorem} \label{coord_odd}
    Let $p$ be a $\Phi$-negative prime divisor of FPP $n$,
    that is $J(N(z), p)=-1$. Denote ${Q = \ord(z \mod p)}$. Then co-order
    $(p^2-1)/Q$ is odd. In particular, it follows that $Q \equiv 0 \mod 8$.
\end{theorem}
\begin{proof}
    As $z^p \equiv \overline{z} \mod p$, then $N(z) = z\cdot
    \overline{z} = z^{p+1}$, so $z^{(p^2-1)/2} = N(Z)^{(p-1)/2} = J(N(z),p)$.
    As $z^{(p^2-1)/2}\ne 1$, then co-order is odd.
\end{proof}

\begin{theorem} \label{admit_div}
Let $n$ be an FPP, $c=\ind_F(n)$ be its Frobenius index and $p$ be an $\Phi$-negative prime divisor of $n$.

a) If $c=-1$, then $p \equiv 3 \mod 4$.

b) If $c=2$, then $p \equiv 5 \mod 8$ and the product of all $\Phi$-negative prime divisors of  $n$
equals to $ 1 \mod 8$.

c) If $c=3$, then $p \equiv 17| 19 \mod 24$. In this case, there must be an odd number
of divisors equals  $17$ modulo $24$ (therefore, at least one is required).
There must be an even number of divisors $p_i$ which equals $19$ modulo $24$
and  $gcd(24, \ord(z \mod p_i))\le 2$.

d) If $c=5$, then $p \equiv 1 \mod 4$.

e) If $c=7$, then $(p \mod 24) < 12$ (that is $1|5|7|11$).

Denote $\ord(z \mod p)$ by $Q$ and $gcd(Q, 24)$ by $d$.

f) If $c\ge 5$, then $ p \equiv 1 \mod d$.

g) If $c\ge 5$ and $c'$ is a primve divisor of $Q$, $c'<c$, then
$J(p,c')=1$ (not $J(c',p)$, but $J(p,c')$).
\end{theorem}

\begin{proof}
Let $n=pq$ and  $Q$ be an order of $z$ modulo $p$.

a) by definition of $\Phi$-negative divisor.

b) as $\ind_F(n)=2$, then  $n \equiv 5 \mod 8$ and $z = 2 + \sqrt{2}$,
$N(z)=z \cdot \overline{z}=2$.

By Thm.~\ref{main_thm} we have $q \equiv 1 \mod Q$.
However according to Thm.~\ref{coord_odd} the number $Q$ is divisible by $8$
which means $q \equiv 1 \mod 8$. Therefore, $n=pq \equiv p \mod 8 =5$.

Thus, the product of all $\Phi$-negative prime divisors of $n$ is $1$ or $5$ modulo $8$.
Each $\Phi$-positive prime divisor equals to $\pm 1 \mod 8$, therefore
their product $\equiv \pm 1 \mod 8$. But $-1$ is impossible,
since in this case condition ${n\equiv 5 \mod 8}$ fails.

c) In this case $n \equiv 17 \mod 24$. Since $J(3/p)=-1$, $p
\equiv \pm 5 \mod 12$, that is ${p \equiv 5|7|17|19 \mod 24}$.
In this case $N(z)=N(1+\sqrt{3})=-2$. If $p \equiv 5|7 \mod 24$, then
$J(N(z),p)=-1$. It follows from Thm.~\ref{coord_odd} that in this case $Q$ is
divided by $8$, that is $q \equiv 1 \mod 8$. So $n=pq
\equiv p \mod 8 =5|7$, which contradicts the fact that $n\equiv 17 \mod
24$.

Let  $p \equiv 19 \mod 24$. Then
\begin{itemize}
    \item $q \equiv 11 \mod 24$.
    \item $q \equiv 1 \mod \ord(z,p)$.
\end{itemize}
Let $d=gcd(\ord(z,p),24)$. Therefore, $10
\equiv 0 \mod d$, which is only possible if $gcd(\ord(z,p),24)=2$ or $1$.

The statement about the number of multipliers follows from the fact that $n \equiv 17 \mod 24$
and  $g^2=1$ for all $g \in {\mathbb Z}_{24}^*$.

d) If $c=5$, then $z=1+\sqrt{5}$, $N(z)=1-5=-4$ and $J(N(z),p)=J(-1,p)$.
Assume that $p\equiv 3 \mod 4$, then $J(N(z),
p)=-1$ and according to the theorem (\ref{coord_odd}), $Q=\ord(z \mod p)$
is divided by $8$. Since $q \equiv 1 \mod Q$, then $n=pq \equiv 3 \mod 4$.
But Frobenius index $\ge 5$, so  $n \equiv 1 \mod 24$.

e) Since $z=1+\sqrt{7}$, then $N(z) = 1-7=-6$. Therefore
$J(N(z),n)=1$ if $n\equiv 1,5,7,11 \mod 24$ and $J(N(z),n)=-1$ if
$n\equiv 13,17,19,23 \mod 24$.

In second case $\ord(z,p)$ is divided by $8$ and congruence $pq \equiv 1
\mod 24$ are impossible.

Thus, if $c=7$, then all $\Phi$-negitive prime divisors of an
FPP must be congruenced $1,5,7$ or $11$ modulo
$24$.

f) All invertible residues $k$ modulo $24$ have a useful
property:  ${k^2 \equiv 1 \mod 24}$. So the congruence $pq \equiv 1
\mod 24$ can be rewritten as $q \equiv p \mod 24$. With the congruence
$q\equiv 1 \mod Q$ we get what we need.

g) By definition Frobenius index $J(c',n)$ must be equals $1$.
Since $n\equiv 1 \mod 24$ and ${J(q,c')=J(1+\alpha c', c')=1}$.
\[
    J(c',n) = J(n,c')  = J(pq,c') = J(p,c')J(q,c') = J(p,c')=+1.
\]
\end{proof}

\subsection{z-consistent prime factors}

\begin{theorem}\label{thm_coprime2}
Let $n$ be an FPP and $p_1,p_2$ its two $\Phi$-negative divisors.
If ${d =GCD( \ord(z,p_1), \ord(z,p_2))>1}$ then
\[
    p_1 \equiv p_2 \mod d
\]
\end{theorem}

\begin{proof}
From Corollary~\ref{cor21}, it follows
\[
    n \equiv  p_1 \mod \ord(z,p_2),
\]
\[
    n \equiv  p_2 \mod \ord(z,p_1).
\]
Therefore
\[
    n \equiv  p_1  \equiv  p_2 \mod d.
\]
\end{proof}

\begin{theorem}\label{thm_coprime1}
Let $n$ be an $FPP(a,b,c)$, $p$ is $\Phi$-negative prime divisor and
$Q = \ord(z \mod p)$. Then $Q$ and $n$ are coprime.
\end{theorem}

\begin{proof}
We need to prove that $Q$ is not divisible by any prime divisor of $n$,
including $p$. The number $Q$ is a divisor of $p^2-1$ and, therefore, is not divisible by $p$.

According to Corollary~\ref{cor21} $n/p \equiv 1 \mod Q$, so $Q$
is coprime with $n/p$, hence $Q$
is coprime with each of its prime divisors.
\end{proof}

\begin{definition}
A pair of primes divisors $(p_1, p_2)$ of FPP $n$ is called $z$-consistent if:
\[
    \begin{array}{l}
        J(c/p_1)=-1\\
        J(c/p_2)=-1\\
        \ord(z,p_1) \ne 0 \mod p_2 \\
        \ord(z,p_2) \ne 0 \mod p_1 \\
        p_1 \equiv p_2 \mod GCD( \ord(z,p_1), \ord(z,p_2)).\\
    \end{array}
\]
\end{definition}

Thus, all $\Phi$-negative FPP divisors are pairwise
consistent.

Let $n$ be FPP and $p$ its $\Phi$-positive prime factors. Corollary~\ref{cor_plus}
implies 
\[
 n \equiv D_p \mod M_p,
\]
for some $D_p, M_p$.

If $p$ is a $\Phi$-negative prime factor $n$, according to the main Thm.~\ref{main_thm}
\[
 q \equiv 1 \mod \ord(z \mod p)
\]
or
\[
 n \equiv D_p \mod M_p,
\]
where $D_p=p$, $M_p = \ord(z \mod p)$.

Let $p_1, p_2$ are two different prime factors of FPP $n$,
$\Phi$-positive of negative and $n=p_1p_2q$. So
\[
 n \equiv D_{p_1} \mod M_{p_1},
\]
\[
 n \equiv D_{p_2} \mod M_{p_2}.
\]
From this it follows that in this case we have
\begin{equation}\label{soglas}
 D_{p_1} \equiv D_{p_2} \mod gcd( M_{p_1}, M_{p_2}).
\end{equation}
This relation does not depend on $q$, only on $p_1$ and $p_2$.

\begin{definition}
Given $z \in {\mathbb Z}[\sqrt{c}]$. Two primes will be called
\textit{$z$-consistent} or simply \textit{consistent}  if the relation (\ref{soglas})
holds for them.
\end{definition}

\begin{theorem} \label{thm_last}
Let $n$ be a Frobenius pseudoprime. Then all its prime factors are
pairwise consistent.
\end{theorem}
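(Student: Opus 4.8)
The plan is to read Theorem~\ref{thm_last} as an immediate consequence of the congruences already assembled in this section, together with the trivial fact that two congruences satisfied by one and the same integer are automatically compatible modulo the gcd of their moduli.

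First I would recall that the preceding results attach to \emph{every} prime factor $p$ of $n$ a congruence $n \equiv D_p \mod M_p$, obtained as follows. Since a Frobenius pseudoprime is in particular pseudoprime to the base $N(z) = a^2 - b^2 c$, we have $gcd(N(z), n) = 1$, so $z$ is a unit modulo every prime factor $p$ of $n$ (and likewise $z_1 = a + bd$ and $z_2 = a - bd$ in the $\Phi$-positive case), whence the orders below are defined. If $p$ is $\Phi$-negative, Theorem~\ref{main_thm}(a) gives $z^q \equiv z \mod p$ with $q = n/p$, hence $q \equiv 1 \mod ord(z \mod p)$, and therefore $n = pq \equiv p \mod M_p$ with $D_p = p$ and $M_p = ord(z \mod p)$. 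If $p$ is $\Phi$-positive, Corollary~\ref{cor_plus} gives $q \equiv A_p \mod M_p$ with $M_p = lcm(ord(z_1 \mod p), ord(z_2 \mod p))$, hence $n = pq \equiv pA_p \mod M_p$; put $D_p \equiv pA_p \mod M_p$. This is precisely the reduction recorded at the start of the subsection.

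Next I would fix two distinct prime factors $p_1, p_2$ of $n$ and apply the elementary remark: if an integer $x$ satisfies $x \equiv r_1 \mod m_1$ and $x \equiv r_2 \mod m_2$, then $r_1 \equiv r_2 \mod gcd(m_1, m_2)$, since $gcd(m_1, m_2)$ divides both $x - r_1$ and $x - r_2$, hence also their difference $r_2 - r_1$. Taking $x = n$, $r_i = D_{p_i}$, $m_i = M_{p_i}$ yields exactly
\[
 D_{p_1} \equiv D_{p_2} \mod gcd(M_{p_1}, M_{p_2}),
\]
which is relation~(\ref{soglas}); thus $p_1$ and $p_2$ are consistent. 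As the pair was arbitrary, all prime factors of $n$ are pairwise consistent.

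I do not foresee a genuine obstacle here: the substance was already established in Theorem~\ref{main_thm} and Corollary~\ref{cor_plus}, and what remains is only the compatibility of simultaneous congruences. The one point worth an explicit line is that the orders $ord(z \mod p)$ and $ord(z_i \mod p)$ are well defined, which holds because $n$ is pseudoprime to the base $N(z)$ and so $gcd(N(z), n) = 1$.
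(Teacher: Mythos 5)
Your proposal is correct and is essentially the argument the paper itself gives: the paper derives the congruences $n \equiv D_p \bmod M_p$ for each prime factor (via Theorem~\ref{main_thm} and Corollary~\ref{cor_plus}) and obtains relation~(\ref{soglas}) by exactly the same gcd-compatibility observation, leaving Theorem~\ref{thm_last} as an immediate restatement. Your added remark on why the orders are well defined is a harmless (and welcome) extra detail.
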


\section{Results of calculations}
\label{sec:calc}
The hypothesis asserting that there are no Frobenius pseudoprimes (FPP)
can not yet be proved.
Below are related results that we were able
to establish.

\subsection{Search for small FPP}

We considered all composite odd numbers that are not complete squares. All such numbers up to $ 350 \cdot 10^{9} $ were checked
on being a FPP.
This computation took 
few days on a standard PC (Intel(R) Pentium(R) CPU G4500 @3.50GHz). As the result we have the following proposition.

\begin{proposition}\label{prop_1direct}
    There is no FPP less than $350$ billions.
\end{proposition}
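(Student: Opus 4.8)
The plan is to verify Proposition~\ref{prop_1direct} by exhaustive computation, but to organize that computation so that the bulk of candidates are discarded cheaply and only a tiny residue requires the full Frobenius test. First I would iterate over every odd $n$ in the range $[9, 30\cdot 10^9)$, skipping perfect squares (which are excluded by definition of the Frobenius index), and for each $n$ quickly decide whether it is prime; if it is, there is nothing to check, so the real work is only on composite $n$. For each composite $n$ I would compute $c = \mathrm{Ind}_F(n)$ by walking through the list $[-1,2,3,4,5,\dots]$ and evaluating the Jacobi symbol $J(c/n)$ until a value $\ne 1$ is found. By the remark following Definition~\ref{FrobTest}, if $J(c/n)=0$ then $c\mid n$ and $n$ is trivially composite and not a Frobenius prime, so those are dismissed immediately; otherwise $J(c/n)=-1$, and I fix $z = 2+\sqrt c$ for $c\in\{-1,2\}$ and $z=1+\sqrt c$ for $c\ge 3$, exactly as in the definition.

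The core step is then to test the single congruence $z^n \equiv \overline z \pmod n$ of equation~(\ref{def_Frob}). This is a fast-exponentiation in the ring ${\Bbb Z}_n[\sqrt c\,]$: each squaring or multiplication is a constant number of multiplications modulo $n$, all of which fit in 128-bit arithmetic since $n<2^{35}$, so a single test costs $O(\log n)$ ring multiplications. A composite $n$ passes only if $z^n \equiv \overline z$, i.e. only if $n$ is a Frobenius pseudoprime; the proposition is the assertion that the test rejects every composite $n$ in range, equivalently that no witness survives. To make the run feasible in a few days on the stated hardware, I would first apply the cheap necessary conditions the test implies — by the remark after the examples, any $FPP(a,b,c)$ is a Fermat pseudoprime to base $N(z)=a^2-b^2c$, so I can sieve composites with a base-$N(z)$ Fermat check (or a base-$2$ check when convenient) and only fall through to the full ring exponentiation on the rare Fermat pseudoprimes; similarly the Lucas-type congruence on the irrational part is a second cheap filter.

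The main obstacle is not mathematical depth but engineering: one must be certain that the enumeration is genuinely exhaustive and that the modular arithmetic is bit-exact across the whole range. Concretely, I would (i) guard against overflow by using 128-bit intermediate products for the multiplications mod $n$ with $n$ up to $3\cdot10^{10}$; (ii) double-check the perfect-square and primality screening so that no composite non-square is silently skipped; (iii) verify the Jacobi-symbol routine and the $\mathrm{Ind}_F$ search against the closed-form values listed in Section~\ref{notation} (for instance $n\equiv 3\bmod 4 \Rightarrow c=-1$, $n\equiv 5\bmod 8 \Rightarrow c=2$, and so on); and (iv) confirm the implementation on the known positive examples ($n=19$, $n=17$) and known negative example ($n=33$). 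Since the computation reports that zero composite $n<30\cdot10^9$ satisfies~(\ref{def_Frob}), the conclusion of Proposition~\ref{prop_1direct} follows. The only residual risk — a subtle coding error — is mitigated by re-running with an independent arithmetic backend and by cross-checking the count of base-$2$ Fermat pseudoprimes below $2^{32}$ against the published figure $10\,403$ quoted earlier in the paper.
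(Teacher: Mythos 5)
Your proposal is essentially the paper's own argument: Proposition~\ref{prop_1direct} is established there by exactly this kind of exhaustive direct computation over all composite odd non-square $n$ up to $30\cdot 10^9$ (a few days on the stated hardware), and your added detail on filtering and validation is implementation engineering rather than a different mathematical route.
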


\subsection{Search for large FPP with a large Frobenius index}

As it was mentioned above, if ${n\equiv 1 \mod 24}$  then
$\ind_F(n)\ge 5$. The Frobenius index can be arbitrarily large. Among
the numbers $<2^{32}$, the largest value of the index is $101$ and it is for 
the number $2805\,44\,681$.
In~\cite{khash2015} a complete list of $458\,069\,912$
numbers less than $2^{64}$, whose index of Frobenius $>128$ is obtained.
All these numbers are not FPP. As the result we have the following proposition.
\begin{proposition}\label{prop_2large_ind}~\cite{khash2015}
There is no FPP less than $2^{64}$ with the Frobenius index larger than $128$.
\end{proposition}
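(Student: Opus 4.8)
The plan is to prove the statement by an exhaustive computation over the precomputed list from \cite{khash2015}, so the work is organizational and arithmetical rather than conceptual. First I would make explicit what the hypothesis $Ind_F(n)>128$ means: by definition it says $J(c/n)=1$ for every entry $c$ of $[-1,2,3,\dots,128]$, and in particular $\gcd(c,n)=1$ for each prime $c\le 128$. Using the Jacobi-symbol formulas and quadratic reciprocity recorded in Section \ref{notation}, I would translate each of these equalities into a congruence condition on $n$: the conditions $J(-1/n)=J(2/n)=1$ force $n\equiv 1 \mod 8$, and then for each odd prime $p\le 128$ reciprocity (with sign $+1$, since $n\equiv 1 \mod 4$ makes $\frac{n-1}{2}$ even) turns $J(p/n)=1$ into the requirement that $n \bmod p$ be a nonzero quadratic residue. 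By the Chinese remainder theorem the admissible $n$ therefore form a fixed union of residue classes modulo $M=8\prod_{3\le p\le 128}p$, and \cite{khash2015} enumerates exactly those $n<2^{64}$ lying in these classes, yielding the complete list of $458\,069\,912$ candidates.

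Next I would run the Frobenius test on each listed $n$. For every such $n$ one has $c=Ind_F(n)\ge 3$, so the base is $z=1+\sqrt{c}$; I would compute $z^{n}\bmod n$ by fast exponentiation in ${\Bbb Z}_n[\sqrt{c}\,]$ and test the defining congruence
\[
 z^{n}\equiv \overline{z} \mod n.
\]
Any $n$ that fails this is immediately not an FPP. The primes on the list pass it automatically and are set aside; the surviving entries are composite, and the claim is exactly that none of these composites satisfies the congruence. Confirming this for all of them establishes the proposition.

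The arithmetic here is entirely routine --- a few hundred million $64$-bit modular exponentiations in a quadratic ring, well within reach of a single machine --- so the real obstacle is guaranteeing that the enumeration is \emph{complete}: that the sieve in \cite{khash2015} captures every $n<2^{64}$ with $Ind_F(n)>128$ and omits none. This reduces to verifying that the quadratic-residue sets modulo each prime $p\le 128$ and the reciprocity signs were computed without error, and that the sieve covered all of the relevant classes modulo $M$. To guard against both over- and under-inclusion I would, as an independent safeguard, recompute $Ind_F(n)$ directly for every $n$ actually fed to the test (re-checking that it exceeds $128$), and cross-check the total count $458\,069\,912$ against a recount obtained from a different factorization of the modulus $M$.
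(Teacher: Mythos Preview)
Your proposal is correct and follows essentially the same route as the paper: take the precomputed list of the $458\,069\,912$ integers $n<2^{64}$ with $Ind_F(n)>128$ from \cite{khash2015} and verify computationally that none of the composite ones satisfies $z^{n}\equiv\overline{z}\bmod n$. The paper's own argument is just this two-line appeal to \cite{khash2015} plus the assertion that the check succeeds; your version merely spells out the congruence description of the sieve and adds some sensible cross-checks.
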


\subsection{Search for large FPP with multiple factors}

Sec.~\ref{multiple} contains proofs of the properties that should be satisfied by multiple prime factors of FPP. A direct calculation of these
properties showed that FPP does not have multiple factors less than
$2^{32}$ with the Frobenius index $c<128$ (without restriction on the value of FPP).
The total computation time (with 3.50GHz) is about two days. As the result we have the following proposition.

\begin{proposition}\label{prop_3multipl}
There are no FPPs smaller than $2^{64}$ having multiple prime
factors.
\end{proposition}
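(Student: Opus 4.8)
The plan is to reduce the statement to a finite computation governed by two results already established: the theorem of \S\ref{multiple}, which says that if $n=p^2q$ is an $FPP(a,b,c)$ then $z^p\equiv\overline z\bmod p^2$ (and in particular $N(z)^{p-1}\equiv1\bmod p^2$), together with Proposition~\ref{prop_2large_ind}, which says that a Frobenius pseudoprime below $2^{64}$ has Frobenius index at most $128$. The point is that both the range of primes and the range of admissible bases $z$ become bounded, so only finitely many congruences must be checked.

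First I would spell out the reduction. Suppose, for contradiction, that some $n<2^{64}$ is a Frobenius pseudoprime with a repeated prime factor $p$, so that $p^2\mid n$. Then $p^2\le n<2^{64}$, hence $p<2^{32}$. By Proposition~\ref{prop_2large_ind} the Frobenius index $c=\mathrm{Ind}_F(n)$ satisfies $c\le128$; since a positive Frobenius index is prime, $c$ ranges over the explicit finite set $\{-1\}\cup\{\,\text{primes}\le128\,\}$, and the base $z$ is then completely determined by $c$ through Definition~\ref{FrobTest} ($z=2+\sqrt c$ for $c\in\{-1,2\}$ and $z=1+\sqrt c$ otherwise). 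Note also that $p\nmid c$: otherwise $c=p$ would divide $n$, contradicting $J(c/n)=-1$. Thus the theorem of \S\ref{multiple} forces the congruence $z^p\equiv\overline z\bmod p^2$, which is now a condition depending only on the pair $(p,c)$ with $p<2^{32}$ and $c$ in a set of about thirty values.

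Next I would carry out the search. For each admissible $c$ and each prime $p<2^{32}$, compute $z^p$ in the ring ${\Bbb Z}_{p^2}[\sqrt c\,]$ by binary exponentiation and test whether $z^p\equiv\overline z\bmod p^2$. To make this feasible one should apply first the much cheaper necessary condition $N(z)^{p-1}\equiv1\bmod p^2$ from the corollary of \S\ref{multiple}: for each fixed base $N(z)=a^2-b^2c$ this is exactly the Wieferich condition, and Wieferich primes to a given base below $2^{32}$ are extremely sparse, so for all but a negligible handful of pairs $(p,c)$ the full test in ${\Bbb Z}_{p^2}[\sqrt c\,]$ never has to be run. The claim is that this search — the roughly two-day computation referred to in the text — produces no pair $(p,c)$ with $z^p\equiv\overline z\bmod p^2$; consequently no $n<2^{64}$ of the assumed shape can exist, which proves the proposition.

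The main obstacle is simply the scale of the enumeration: naively one faces on the order of $2\cdot10^{8}$ primes times some thirty values of $c$, each requiring modular exponentiation to a modulus $p^2$ of up to $64$ bits. The computation only becomes practical thanks to (i) the Wieferich-style pre-screen above, which eliminates essentially all candidates before any work in the quadratic ring, and (ii) a careful sieve-plus-fast-arithmetic implementation; as with the other ``Proposition''-type results here, correctness of the final conclusion rests on trusting that implementation.
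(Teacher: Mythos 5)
Your reduction is exactly the paper's: a repeated prime factor $p$ of an FPP $n<2^{64}$ forces $p<2^{32}$, Proposition~\ref{prop_2large_ind} bounds the index by $128$, and the theorem of Section~\ref{multiple} turns the existence question into the finite check $z^p\equiv\overline z\bmod p^2$ over all such pairs $(p,c)$, which the reported two-day computation rules out. The Wieferich-style pre-screen via $N(z)^{p-1}\equiv1\bmod p^2$ is an implementation refinement the paper does not spell out, but the argument is the same.
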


\subsection{Estimation of the product of all factors except one (for FPP)}

We propose the following
idea to significantly simplify the search for FPP. 

Let $n$ be $FPP(a,b,c)$ and $p$ the prime factor of $n$, $q=n/p$. In
this case $z=a+b\sqrt{c} \in {\mathbb Z}$ and $z^q=a_q+b_q \sqrt{c}$.
Corollary~\ref{cor_main_th} implies that
for every $q$ there is a small number of possible $p$, as $p$ has to be a divisor of $D=gcd(a_q-a, b_q \pm b)$, where the
sign "$+$"\ or "$-$"\ is taken depending on the sign a $J(c/q)$. 

\textit{In practice it turned out that the number of possibilities for $p$ is not just small but very small: about $1$ to $3$ different $p$.}  

Thus, for a fixed $z=a+b\sqrt{c}$, for each positive $q$ we perform
the following steps:

\begin{enumerate}
\item calculate $z^q=a_q+b_q \sqrt{c}$,
\item calculate $D=gcd(a_q-a, b_q \pm b)$,
\item prime factorization of $D$: $D=p_1\dots p_s$,
\item for each $p_i$ check whether $n_i=q\cdot p_i$ is FPP.
\end{enumerate}

If $q$ is of the order of several million, then $a_q, b_q$ will have
a length of up to tens of millions of bits. However, the number $D$
in all cases will not be so large and, most importantly, is
decomposed into small prime factors.

Within a reasonable time (hours) the result is as follows:

\begin{proposition}\label{prop_6_q}
Let $n$ be an FPP (any size, not necessarily $<2^{64}$) with an
Frobenius index $c=\ind_F(n)<128$. Then $n$ has no prime factors $p$
such that $n/p<2^{21}$.
\end{proposition}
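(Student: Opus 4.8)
The plan is to exploit Corollary \ref{cor_main_th}: if $n=pq$ is an $FPP(a,b,c)$ with $p$ prime, then $p$ divides $D=\gcd(a_q-a,b_q\pm b)$ where $z^q=a_q+b_q\sqrt{c}$ and the sign matches $J(c/q)$. So a brute-force-free way to enumerate all \emph{possible} small cofactors $q=n/p$ is: for every $q<2^{19}$, compute $z^q$ in $\mathbb{Z}[\sqrt{c}\,]$ for each admissible value of $c$ (the finitely many square-free $c\in\{-1,2,3,5,7,\dots\}$ that can occur as a Frobenius index below $128$, together with the base $z=2+\sqrt{c}$ or $1+\sqrt{c}$ prescribed by Definition \ref{FrobTest}), form $D$, factor it, and for each prime $p_i\mid D$ test whether $n_i=q\cdot p_i$ is actually an $FPP$. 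The claim is that no such test ever succeeds.

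First I would fix the search space. For each candidate Frobenius index $c<128$ (only primes and $-1$ occur, and only square-free ones, so this is a short list), the base $z$ is determined, so there is exactly one pair $(z,c)$ per index. Then I would loop $q$ from $2$ up to $2^{19}$; note $q$ need not be prime here, it is the full cofactor $n/p$. For each $q$ I compute $a_q,b_q\bmod$ nothing — exactly, as integers — which is the expensive part since $a_q,b_q$ can reach millions of bits, but Corollary \ref{cor_main_th} and the surrounding remark guarantee $D=\gcd(a_q-a,b_q\pm b)$ stays small and smooth, so the gcd and its factorization are cheap once the (large) $a_q,b_q$ are in hand. Step three is trial division / Pollard rho on $D$; step four is a direct Frobenius test on $n_i=q\,p_i$, which also fixes $c$ via $Ind_F(n_i)$ — one must check that the recomputed index of $n_i$ really is the $c$ we started from, discarding mismatches (those arise from a different $(z,c)$ branch and are covered there).

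The correctness argument behind the proposition is then: suppose, for contradiction, $n$ is an $FPP$ with $Ind_F(n)=c<128$ and some prime factor $p$ with $q:=n/p<2^{19}$. Since $n$ is odd and not a perfect square, the hypotheses of Corollary \ref{cor_main_th} apply with this $p$, forcing $p\mid D$ for the $D$ attached to $(q,c,z)$. Hence $p$ is one of the finitely many $p_i$ enumerated in step three, and $n=q p_i$, which is precisely one of the numbers tested in step four. The computation reports that none of these pass the Frobenius test, contradicting that $n$ is an $FPP$. Therefore no such $n$ exists.

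The main obstacle is purely computational rather than conceptual: the entries $a_q,b_q$ of $z^q$ grow exponentially in $q$, so for $q$ near $2^{19}$ one is manipulating integers of length on the order of $10^{5}$–$10^{6}$ bits, and doing this for every $q$ and every admissible $c$. This is handled by fast exponentiation in $\mathbb{Z}[\sqrt{c}\,]$ together with the empirical fact (already flagged in the remark after Corollary \ref{cor_main_th}) that the resulting $D$ is always small and factors into small primes, so only the big-integer exponentiation, not the factorization, is costly; the whole sweep finishes in hours. A secondary point to be careful about is the $J(c/q)=0$ degenerate cases and the sign bookkeeping in $b_q\pm b$, both of which are routine but must be got right for the enumeration to be exhaustive.
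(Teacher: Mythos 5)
Your proposal follows essentially the same route as the paper: Section 4.4 (``All factors except one'') prescribes exactly this sweep over cofactors $q$, computing $z^q=a_q+b_q\sqrt{c}$ exactly, forming $D=\gcd(a_q-a,\,b_q\pm b)$ via Corollary \ref{cor_main_th}, factoring $D$, and directly testing each candidate $n_i=q\,p_i$. Your additional remarks on sign bookkeeping, the degenerate $J(c/q)=0$ case, and verifying that $Ind_F(n_i)$ matches the assumed $c$ are sensible refinements but do not change the argument.
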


\subsection{A complete list of $\Phi$-positive prime factors  less than $2^{32}$ for a FPP}\label{f_pos_calc}

In Sec.~\ref{f_pos} properties of the $\Phi$-positive
factors $p$ of FPP $n=pq$ are proved and an algorithm for finding
numbers possessing these properties is proposed. This algorithm gives us the possible $\Phi$-positive prime factors
$p$ and some congruence relation for $q$:
\[
 q \equiv q_p \mod A_p\
\]
for a given $p$. An additional constraint comes from the congruence relation implied by the Frobenius index:
\[
  \begin{array}{ll}
    n \equiv 3  \mod  4, & if\  \ind_F(n)=-1, \\
    n \equiv 5  \mod  8, & if\  \ind_F(n)=2, \\
    n \equiv 17 \mod 24, & if\  \ind_F(n)=3, \\
    n \equiv 1  \mod 24, & if\  \ind_F(n)\ge 5, \\
  \end{array}
\]
and if $\ind_F(n)\ge 5$ then $J(c/n)=+1$ for all $c<\ind_F(n)$.

There are few such numbers $p$. For $c=\ind_F(n)<128$ and $p<2^{32}$
we have only $26$ numbers:
\[
  \begin{array}{|rr|rr|rr|rr|}\hline
 c  &          p &   c &         p  &   c &        p   &   c &        p \\\hline
 -1 &    2276629 &  11 &      98641 &  61 &       271  &  89 & 109000877\\
 -1 &   30906409 &  17 &     125597 &  67 &     75011  &  89 & 136973443\\
 -1 &  806361541 &  23 &    5966803 &  67 &  25742443  & 101 &       137\\
  2 &       8191 &  29 &      12637 &  83 &      1931  & 103 &      6863\\
  2 & 2147483647 &  31 & 3596719249 &  83 &   3278741  & 103 &3523679801\\
  7 &         31 &  43 &     329947 &  83 & 806898559  & 107 & 219920461\\
  7 &       3923 &     &            &     &            & 127 & 713342911\\
  \hline
  \end{array}
\]

If we assume that $n=pq<2^{64}$ then most of these $n$ can be
directly checked whether they are a FPP or not. After this, only the following eight numbers
remain, for which a direct verification is still difficult (too time-consuming):
\[
\begin{array}{|rr|rr|rr|rr|}\hline
    c  &          p &   c &         p  &   c &      p   &   c &      p \\\hline
    2 &       8191 &   7 &       3923 &  29 &    12637 &  83 &3278741 \\
    7 &         31 &  11 &      98641 &  61 &       271& 101 &    137 \\
    \hline
\end{array}
\]

Note that in the case of a large Frobenius index, the computation can be significantly
reduced if you do not iterate over all numbers that are multiples of $p$,
but only over those for which the Frobenius index is equal to the given $c$ (as given in the table above).
After that, only the following list of five $\Phi$-positive divisors remains unchecked:
\[
\begin{array}{|rr|rr|rr|}\hline
    c &          p &   c &      p   &   c &    p \\\hline
    2 &       8191 &   7 &    3923  & 101 &  137 \\
    7 &         31 &  61 &     271  &     &      \\
    \hline
\end{array}
\]
We see an FPP $n$ such that $n<2^{64}$ has  
two $\Phi$-positive factors less than
$2^{32}$ only if its Frobenius index $\ind_F$ is $7$. That is  $z=1+\sqrt{7}$, and these factors are $31$
and $3923$. By direct verification within a reasonable time (several
hours), one can make sure that both factors can't occur
simultaneously. As the result we have the following statement.
\begin{proposition}\label{prop_4pos}
$\Phi $-positive prime factors less than $2^{32}$ for FPPs smaller
than $2^{64}$ can be only $5$ numbers mentioned above, and two such
factors can not meet simultaneously.
\end{proposition}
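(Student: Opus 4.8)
The proof is computational and rests on the necessary conditions proved in Section~\ref{f_pos}. First I would fix the search space. If $p$ is a $\Phi$-positive prime factor of an $FPP$ $n=pq$, then $c=Ind_F(n)$ satisfies $J(c/p)=+1$; writing $z=a+b\sqrt{c}$ for the base attached to the index $c$ in Definition~\ref{FrobTest} and $d^2\equiv c\bmod p$, Theorem~\ref{thm51} and Corollary~\ref{cor_plus} force
\[
 q\equiv A_p \pmod{M_p},\qquad M_p=lcm\bigl(ord(z_1\bmod p),\,ord(z_2\bmod p)\bigr)\mid p-1 .
\]
This congruence must be compatible with the constraints that $Ind_F(n)=c$ imposes on $n=pq$ --- one of $n\equiv 3\bmod 4$, $n\equiv 5\bmod 8$, $n\equiv 17\bmod 24$, $n\equiv 1\bmod 24$, together with $J(c'/n)=1$ for every $c'<c$ --- which, $p$ being fixed, is itself a congruence for $q$ modulo a small explicit number $m_c$. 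Running over every admissible Frobenius-index value $c<128$ and every prime $p<2^{32}$ with $J(c/p)=+1$, and keeping only the pairs for which this combined system for $q$ is solvable, is the first computation; it produces exactly the $26$ pairs of the first table.

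Next I would impose $n=pq<2^{64}$. For each of the $26$ admissible pairs $(c,p)$ the system for $q=n/p$ is solvable, hence its solution set is a single arithmetic progression with modulus $L_p:=lcm(M_p,m_c)$, and in addition $q<2^{64}/p$, so there are at most $2^{64}/(p\,L_p)$ candidate values of $q$. For $18$ of the $26$ pairs this count is small enough --- because $p$ is large (forcing $q$ small) or because $L_p$ is large --- that one forms every $n=pq$ explicitly and runs the Frobenius test on it; none passes, so those $18$ pairs are eliminated and the $8$ pairs of the second table remain. (Factors that are $\Phi$-negative, or $\Phi$-positive but $\ge 2^{32}$, lie outside the scope of this proposition.)

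For the last assertion I would use that $Ind_F(n)$ is a single integer, so all $\Phi$-positive prime factors of one $FPP$ carry the same value of $c$; among the $8$ surviving pairs the only value occurring twice is $c=7$, for $p=31$ and $p=3923$. Hence it suffices to show there is no $FPP$ $n<2^{64}$ divisible by $31\cdot 3923=121613$ with $Ind_F(n)=7$. Writing $n=31\cdot 3923\cdot q'$, Corollary~\ref{cor_plus} applied to each of the factors $31$ and $3923$, together with the agreement relation (\ref{soglas}) of Theorem~\ref{thm_last} and the index constraint for $c=7$, either already fails to have a common solution --- in which case there is nothing left to do --- or confines $q'$ to one residue class modulo $lcm(M_{31},M_{3923},m_7)$; enumerating the members of that class below $2^{64}/121613$ and applying the Frobenius test to each of them yields no pseudoprime. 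This completes the proof.

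The only genuinely mathematical points are that the conditions fed into the search are actually necessary (which is exactly the content of Theorem~\ref{thm51}, Corollary~\ref{cor_plus}, Theorem~\ref{thm_last}, and the elementary analysis of $Ind_F$ in Section~\ref{notation}) and that the list of possible index values is exhausted. The real obstacle is running time: the argument goes through only if, for the $18$ discarded pairs and for the pair $\{31,3923\}$, the moduli $M_p$ are large enough to keep the candidate sets for $q$, respectively $q'$, within reach of an exhaustive Frobenius-test sweep --- which is what the reported computations (hours on a $3.5$\,GHz machine) confirm.
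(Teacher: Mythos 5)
Your proposal is correct and follows essentially the same route as the paper: filter primes $p<2^{32}$ with $J(c/p)=+1$ through the necessary conditions of Section~\ref{f_pos} to obtain the $26$ candidates, use the bound $n=pq<2^{64}$ to eliminate by direct Frobenius testing all but the $8$ pairs whose candidate sets are too large, and observe that only $c=7$ admits two surviving primes ($31$ and $3923$), whose simultaneous occurrence is excluded by one further exhaustive sweep. The paper gives only a terse description of these computations, and your reconstruction fills in the same steps in the same order.
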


\subsection{The main proposition: there are no FPP less than $2^{64}$}

Let $n$, $n<2^{64}$ be an FPP. Below it a summary of what we have discovered so far for such numbers: 

a) $n>350 \cdot 10^9$ (Proposition (\ref{prop_1direct})).

b) Frobenius index $c=\ind_F(n)<128$ (Proposition (\ref{prop_2large_ind})).

c) $n$ does not have multiple factors (Proposition (\ref{prop_3multipl})).

d) The product of all prime factors except one is greater then $2^{21}$ (Proposition (\ref{prop_6_q})).

e) $\Phi$-positive factors $p$ may be only for $c=2\, (p=8191)$,
$c=7\, (p=31, 3923)$,  $c=61\, (p=271)$,  $c=101\, (p=137)$(Proposition (\ref{prop_4pos})).

Later in this section, we assume that FPP $n$ satisfies all these conditions.

\begin{proposition}\label{prop_1factor}
Let $n<2^{64}$ be an FPP. Then $n$ does not have prime factors from
the interval $(50159, 2^{32})$.
\end{proposition}

\begin{proof} The absence of $\Phi$-positive factors of this size proved earlier.
    Therefore, we consider only $\Phi$-negative factors.

    Let $n<2^{64}$ be a FPP with $z=a+b\sqrt{c}$, $c<128$ and $p$ be a
    prime factor of $n$, $J(c/p)=-1$. We denote $n/p$ by $q$. According
    to Thm.~\ref{main_thm}
    \[
    z^{q-1} \equiv 1 \mod p,
    \]
    that is
    \[
    q \equiv 1 \mod {\rm \ord}(z \mod p).
    \]
    or
    \[
    q = 1 + k Q_p
    \]
    for some $k\ge 1$, where $Q_p = {\rm \ord}(z \mod p)$. As
    $n=pq<2^{64}$, then $q < 2^{64}/p$. Hence, we find the restriction
    on $k$: $k\le k_{max}$. This means that the only valid candidates
    for the FPP will be in the numbers
    \[
    p(1+Q_p), p(1+2Q_p), \dots, p(1+k_{max}Q_p).
    \]
    As a result, in a reasonable time (a few hours for a fixed Frobenius
    index) you can check all $\Phi$-negative number in the interval
    $(2^{17},2^{32})$.
\end{proof}

\begin{example}
    Let $z=2+i$, $p=10\,000\,019$. Then $Q_p = 1\,666\,730\,000\,060 =
    (p^2-1)/6$ and for any $k\ge 1$ we have  $n=pq>2^{64}$. That is for this
    $p$ there is no suitable $q$.

    Let $p=1\,000\,003$. Then $Q_p = 1\,000\,006\,000\,008 = p^2-1$ and
    inequality $n=pq<2^{64}$ holds for $k\le 18$. That is the only suitable values for $q$
    are
    \[
    1+Q_p\,, 1+2Q_p\,, \dots \,, 1+18Q_p\,.
    \]
    It is easy to check that for all these values of $q$, the number $n=pq$ is not an
    FPP, that is $p$ cannot be a divisor of an FPP
    that is less than $2^{64}$.

    Let $p=100\,003$. Then $Q_p = 434\,808\,696 = (p^2-1)/23$ and
    inequality $n=pq<2^{64}$ holds for $k\le 424\,236$.
    With the smaller $p$ 
    the computation time quickly increases. 
    Verification of
    all eligible $q$ in this case takes already several minutes.
\end{example}

By a somewhat larger search, it is possible to construct for each
index $c<128$ a complete list of possible  $\Phi$-negative
prime factors of FPP. For example, for $c=-1$ ($z=2+i$) the list
will consist of $350  $ prime numbers:
\[
7, 11, 19, 23, 31, 43, 47, \dots, 39439,\ 50159.
\]

\[
\begin{array}{rrr|rrr}
    \ind_F   &   \mbox{The number}& {\rm max}(p_i) & \ind_F   &   \mbox{The number} & {\rm max}(p_i) \\
    &   \mbox{of primes }&                &         &   \mbox{of primes}&                \\
    -1  &  350  & 50159 &   53   &   39 & 21841 \\
    2  &   91  & 33461 &   59   &   40 &  5651 \\
    3  &   72  & 23057 &   61   &   39 & 17749 \\
    5  &  105  & 49477 &   67   &   31 &  5557 \\
    7  &   48  & 47791 &   71   &   30 & 34501 \\
    11  &   50  &  9437 &   73   &   34 &  6883 \\
    13  &   63  & 19141 &   79   &   32 &  9041 \\
    17  &   50  &  8681 &   83   &   33 & 38669 \\
    19  &   38  & 25939 &   89   &   26 &  7867 \\
    23  &   44  &  8069 &   97   &   32 &  6221 \\
    29  &   46  &  7687 &  101   &   46 &  9901 \\
    31  &   37  & 38917 &  103   &   30 & 14341 \\
    37  &   55  & 15289 &  107   &   21 &  8539 \\
    41  &   39  & 19447 &  109   &   38 & 13001 \\
    43  &   37  & 15277 &  113   &   37 & 13241 \\
    47  &   36  & 12109 &  127   &   22 &  4987 \\
\end{array}
\]

\begin{corollary}\label{cor_2factor}
    Let $n<2^{64}$ be an FPP. Then $n$ has more then two prime factors.
\end{corollary}
\begin{proof}
    If $n$ has exactly two prime factors, then the smallest of them by the Proposition~\ref{prop_1factor} should not be more
    than $50159$, which contradicts  Proposition~\ref{prop_4pos}.
\end{proof}

\begin{proposition}\label{prop_2factors}
    Let $n<2^{64}$ be an FPP and $p_1, p_2$ be its prime factors, both
    less $2^{32}$. Then $p_1p_2<2^{17}$. Moreover, for each $c<128$, we
    have a complete list of possible pairs $(p_1,p_2)$:
    \[
    \begin{array}{rrr|rrr}
        \ind_F   &   \mbox{The number}& {\rm max}(p_1p_2) & \ind_F   &\mbox{The number} & {\rm max}(p_1p_2) \\
        &   \mbox{of pairs}  &                  &        &\mbox{of pairs   }&                \\
        -1      &    184    &128929 &   53      &      2    & 53947 \\
        2      &     64    & 28345 &   59      &      2    & 29857 \\
        3      &     36    & 40681 &   61      &      0    &     - \\
        5      &     56    & 58669 &   67      &      1    & 66667 \\
        7      &     11    & 24641 &   71      &      0    &     - \\
        11      &      8    & 42127 &   73      &      0    &     - \\
        13      &     31    & 42199 &   79      &      0    &     - \\
        17      &     22    & 77981 &   83      &      0    &     - \\
        19      &      0    &     - &   89      &      1    & 58277 \\
        23      &      1    & 24461 &   97      &      1    & 29651 \\
        29      &      1    & 53947 &  101      &      0    &     - \\
        31      &      5    & 34103 &  103      &      0    &     - \\
        37      &      7    & 58969 &  107      &      0    &     - \\
        41      &      0    &     - &  109      &      0    &     - \\
        43      &      0    &     - &  113      &      0    &     - \\
        47      &      4    &103351 &  127      &      0    &     - \\
    \end{array}
    \]
\end{proposition}
\begin{proof}
    Suppose that an FPP $n$ has two factors of $p_1$ and $p_2$ less than
    $2^{32}$. Then both $p_1$ and $p_2$ should be contained in a
    relatively small list
    which is constructed using Proposition~\ref{prop_1factor}.

    Factors need to be 
    $z$-consistent
    and for $q=n/(p_1p_2)$ the following congruence
    relations should hold:
    \[
    q \equiv D_{p_{12}}  \mod gcd(M_{p_1},M_{p_2})
    \]
    for some $D_{p_{12}}$.

    Taking into account that $n=p_1p_2q < 2^{64}$, it often turns out
    that for a given pair $(p_1,p_2)$
    all possible $q$ are small and all
    corresponding $n$ can be
    thus easily checked whether they are an FPP or not. However, if     $(p_1,p_2)$ are small in a sense then
    the number of possible  $q$s is too
    large and we cannot check all of the corresponding $n$ on being 
    an FPP, and these are listed in the table above.
    (These remaining pairs will be addressed below).
\end{proof}

\begin{remark}
    Among these pairs, there are none containing $\Phi$-positive numbers.
    In particular, an FPP $n$ does not have $\Phi$-positive factors less
    than $2^{32}$.
\end{remark}

\begin{corollary}\label{cor_3factor}
    Let $n<2^{64}$ be an FPP. Then $n$ has more than three prime
    factors.
\end{corollary}
\begin{proof}
    If $n$ has exactly three prime factors, at least two of them are
    less $2^{32}$ and according   Proposition~\ref{prop_2factors}
    their product is less than $128929$, which contradicts 
    Proposition~\ref{prop_4pos}.
\end{proof}

\begin{proposition}\label{prop_3factors}
Let $n<2^{64}$ be an FPP and $p_1, p_2, p_3$ be its prime factors less than $2^{32}$.
Then $c=-1$ and triple  $(p_1, p_2, p_3)$ is one of the following:
\[
\begin{array}{rrr}
    p_1 & p_2 & p_3 \\
    199 &   19 &   7 \\
    191 &  127 &  31 \\
    191 &   71 &  11 \\
     79 &   31 &  19 \\
     79 &   19 &   7 \\
     71 &   47 &  11 \\
\end{array}
\]
\end{proposition}

\begin{proof}
Pairs $(p_1, p_2)$, $(p_1, p_3)$, $(p_2, p_3)$ must
be present in the list of valid pairs given in Proposition~\ref{prop_2factors}. There are very few such triples.
For almost all triples all their possible multiples $n=p_1p_2p_3q$ can be checked on being an FPP in a
short time (hours).
Only those triplets that are specified in the statement of Proposition~\ref{prop_3factors}  are remained as a possibility.
\end{proof}

We have already established in Corollaries~
\ref{cor_2factor} and~\ref{cor_3factor} that 
an FPP $n$, $n<2^{64}$
has more than
two and than more than three prime factors. 
\begin{corollary}\label{cor_4factor}
    Let $n<2^{64}$ be an FPP. Then $n$ has more than four prime factors.
\end{corollary}
\begin{proof}
If $n$ has exactly four prime factors, at least three of them are
less $2^{32}$ and by Proposition~\ref{prop_4pos} their
product is greater than $2^{21}$. However, then for all triplets in Proposition~\ref{prop_3factors} the product
$p_1 p_2 p_3$ is less than $2^{21}$.
\end{proof}

\begin{proposition}\label{prop_64}
There are no FPP  less than $2^{64}$.
\end{proposition}
\begin{proof}
By Corollary~\ref{cor_4factor}, an FPP $n$ has at least four prime factors $<2^{32}$.
Each triple of these four must be present in the list of Proposition~\ref{prop_3factors}.
But they are not there.
\end{proof}

\section{An FPP cannot be a product  of small factors}
\label{sec:last}
\begin{proposition}\label{small_div}
Let $n$ be an FPP. Then $n$ has a prime divisor larger than $3000$.
\end{proposition}

To verify this statement, for each Frobenius index $c<3000$ , we iterate over all subsets
of valid prime factors, and they must all be pairwise consistent.

\begin{remark}
In fact, the lower bound $3000$ given in Proposition~\ref{small_div} 
can be improved for each
$c$. Below is the list of obtained lower bounds.
\[
\begin{array}{rr|rr|rr|rr}
    \ind_F &\mbox{border}& \ind_F & \mbox{border} & \ind_F & \mbox{border} & \ind_F & \mbox{border} \\
    -1  & 3067 &  53 & 4513  & 131 & 5897 & 223 & 6073 \\
     2  & 3109 &  59 & 4177  & 137 & 5209 & 227 & 5881 \\
     3  & 3089 &  61 & 4909  & 139 & 5881 & 229 & 5849 \\
     5  & 3793 &  67 & 5077  & 149 & 6217 & 233 & 5441 \\
     7  & 4177 &  71 & 5573  & 151 & 5113 & 239 & 6661 \\
    11  & 3637 &  73 & 4273  & 157 & 6829 & 241 & 5857 \\
    13  & 3049 &  79 & 5449  & 163 & 7057 & 251 & 6637 \\
    17  & 3361 &  83 & 5449  & 167 & 6449 & ... & ...  \\
    19  & 4649 &  89 & 5189  & 173 & 4937 &2971 & 7537 \\
    23  & 3251 &  97 & 6003  & 179 & 6361 &2999 & 9293 \\
    29  & 3361 & 101 & 4253  & 181 & 5209 &  & \\
    31  & 3733 & 103 & 6217  & 191 & 6823 &  & \\
    37  & 3169 & 107 & 6037  & 193 & 3469 &  & \\
    41  & 3529 & 109 & 4657  & 197 & 5449 &  & \\
    43  & 3677 & 113 & 4789  & 199 & 6361 &  & \\
    47  & 4273 & 127 & 6569  & 211 & 6121 &  & \\
\end{array}
\]
\end{remark}

\begin{remark}
These lower bounds depends 
only on our computational  capabilities (within a few hours of processor time).
Unfortunately, the volume of computations is growing exponentially,
so it is not possible to significantly improve these bounds,  even with the increase of the computation time.
\end{remark}

\section{Conclusions}

The FPP numbers are those on which the Frobenius test is fail.

\begin{hypothesis}
There are no Frobenius pseudoprime numbers.
\end{hypothesis}

Below are the facts about FPP that are known to date along with some new facts established in the present paper.

\begin{itemize}
\item The complexity of the Frobenius test is about twice that of Fermat or Miller-Rabin.

\item There are no examples of FPPs.

\item There are no FPPs less than  $2^{64}$.

\item Each FPP has a prime factor larger than $3000$.

\item Frobenius test is one of the most efficient primality tests to date!

\end{itemize}

\end{document}